\documentclass[12pt,a4paper]{article}

\usepackage{amssymb}
\usepackage{amsfonts}
\usepackage{amsmath}
\usepackage{amsthm}
\usepackage[english]{babel}
\usepackage[cp1250]{inputenc}
\usepackage[T1]{fontenc}
\usepackage[pdftex]{graphicx}


\newtheorem{thm}{Theorem}
\newtheorem{cor}{Corollary}
\newtheorem{lem}[thm]{Lemma}

\title{Bessel Function Model for Corneal Topography}
\author{Wojciech Okrasi\'nski, \L ukasz P\l ociniczak}

\begin{document}
\maketitle

{\bf Keywords:} cornea shape, mathematical model, boundary value problem, Bessel Function

\begin{abstract}
In this paper we consider a new nonlinear mathematical model for corneal topography formulated as two-point boudary value problem. We derive it from first physical principles and provide some mathematical analysis. The existence and uniqeness theorems are proved as well as various estimates on exact solution. At the end we fit the simplified model based on Modified Bessel Function of the First Kind with the real corneal data consisting of matrix of $123\times 123$ points and obtain an error of order of $1 \%$.
\end{abstract}

\section{Introduction}
Sight is the most important sense that we posses. It enables us to truly perceive the outer world. It is crucial to understand the mechanics of vision in order to treat various diseases that may occur and disturb regular seeing. The eye's main part responsible for about two-thirds of refractive power is the cornea. It is one of the most sensitive parts of human body and its various irregularities can cause many seeing disorders. Common diseases like myopia, hyperopia or astigmatism have their origin in some abnormalities in corneal geometry. Thus, precise knowledge of corneal shape is very important.

We can distinguish several mathematical models of cornea which are presently in use. Most commonly, cross-section of cornea is described by some conic section \cite{Conic} (for example, parabola or ellipse). The drawback of these models is that they have almost no physical motivation or derivation behind them. Probably the most appropriate model of corneal geometry would be done with the use of shell theory \cite{Shell}. These models usually are quite complex and hence difficult to analyse. There are also models that use Zernike polynomials \cite{Zernike}. These are certain orthogonal polynomials that are used to model abberations in cornea or lens.

In this paper we present another model of corneal geometry based on a nonlinear membrane equation. Assuming radial symmetry we reduce it to a one-dimensional case and then provide some estimates on the exact solution. When fitting with data we use its simplified form and find that mean error is of order of a few per cent. 

\section{Derivation}
Cornea is a biological structure which is similar to a shell. Its diameter is about $11mm$ and its thickness is approximately $0.5mm$ in the center and $0.7mm$ in the peripheral part \cite{Cornea}. Thanks to its structural parameters cornea provides a great protection for more sensitive, inner parts of the eye. We will distinguish only interior and exterior surface of cornea although it is made of five layers. In order from the most exterior they are as follows: epithelium, Bowman's layer, stroma, Descement's membrane and endothelium. Stroma is the thickest layer sometimes identified with cornea itself.

We would like to derive an equation describing the steady state shape of cornea's surface (either interior or exterior). We will describe this geometry by a function $h=h(x,y)$. Assume that cornea is a two-dimensional thin membrane, that is, it has no bending or twisting moments and its tension $T$ is constant at every point. Moreover, impose a pressure $P$ acting from below and normally to the surface. To model cornea's elastic properties we propose an additional restoring force which is proportional to the deflection $h$. Finally, let $\Omega$ be the domain in $xy$ plane on which this membrane is situated.

It can be shown (see \cite{Tik}) that a steady state membrane equation is of the form
\begin{equation}
	-T\Delta h=F,
\end{equation}
where $F$ is the net force density (pressure) acting on the membrane. In our case the equation will be
\begin{equation}
	-T\Delta h+k h=\frac{P}{\sqrt{1+\left\|\nabla h\right\|^2}}. 
	\label{dimensional}
\end{equation}
This is a second order nonlinear partial differential equation. The square root on the right-hand-side comes from the projection of normal vector onto the vertical $z$-axis. For the boundary conditions we impose
\begin{equation}
	h=0 \quad on \quad \partial\Omega.
\end{equation}
Appearing constants have following dimensions: $[T]=\frac{N}{m}=M T^{-2}$, $[k]=\frac{N}{m^3}=M L^{-2} T^{-2}$ and $[P]=\frac{N}{m^2}=M L^{-1} T^{-2}$, where $M$, $L$, $T$ are dimensions of mass, length and time respectively. Also, let $R$ be a typical linear size of the cornea (for example its radius). Taking
\begin{equation}
	h^*=\frac{h}{R}, \quad x^*=\frac{x}{R},
\end{equation}
we transform (\ref{dimensional}) into a nondimensional equation (dropping asterisks for clarity)
\begin{equation}
	-\Delta h+ah=\frac{b}{\sqrt{1+\left\| \nabla h \right\|^2}} \quad on \quad \Omega,
	\label{eqlap}
\end{equation}
where we denoted $a:=\frac{k R^2}{T}$ and $b:=\frac{P R}{T}$. From physical reasoning we allow only positive values of $a$ and $b$. We will see in Section \ref{fitting} that these parameters are both of order of unity.

\section{Model Analysis}

From now on $\Omega$ will be an unit circle in the $xy$-plane. We assume that cornea has radial symmetry, thus $h$ depends only on radius $r$ and in polar coordinates $h=h(r)$. Rewriting (\ref{eqlap}) we get
\begin{equation}
 -\frac{1}{r}\frac{d}{dr}\left( r \frac{dh}{dr} \right)+ah=\frac{b}{\sqrt{1+ h'^2}} \quad 0\leq r\leq1
 \label{eqpolar}
\end{equation}
with following boundary conditions
\begin{equation}
	h'(0)=0, \quad h(1)=0.
	\label{boundary}
\end{equation}
The condition $h'(0)=0$ guarantees absence of singularities and smoothness at the origin. 

We transform (\ref{eqpolar}) into an integral equation 
\begin{equation}
	h(r)=\frac{b}{I_0(\sqrt{a})}\left(v_0(r)\int_{r}^{1}{t v_1(t) P(h'(t))dt} + v_1(r)\int_{0}^{r}{t v_0(t) P(h'(t))dt} \right),
	\label{eqint}
\end{equation}
where $v_0$, $v_1$ and $P$ are defined by
\begin{equation}
\begin{split}
	&v_0(r):=I_0(\sqrt{a}r), \quad v_1(r):=I_0(\sqrt{a})K_0(\sqrt{a}r)-I_0(\sqrt{a}r)K_0(\sqrt{a}), \\
	&P(x)=\frac{1}{\sqrt{1+x^2}},
	\label{eqv}
\end{split}
\end{equation}
and $I_\nu$, $K_\nu$ ($\nu\in \mathbb{R}$) are $\nu$-order modified Bessel functions of first and second kind respectively. From elementary properties of Bessel functions (see \cite{AS}) we can deduce the following estimates
\begin{equation}
\begin{split}
	&v_0(r)>0, \quad v_1(r) \geq 0, \quad v'_0(r)=\sqrt{a} I_1(\sqrt{a}r) \geq 0, \\
  &v'_1(r)=-\sqrt{a}\left(I_0(\sqrt{a})K_1(\sqrt{a}r)+I_1(\sqrt{a}r)K_0(\sqrt{a})\right)\leq0.
  \label{bessel}
\end{split}
\end{equation} 
Thus, we have $h\geq0$. Also, it can be shown that
\begin{equation}
	\frac{d}{dr}\left(-r v'_1(r)\right)=-a r\left(I_0(\sqrt{a})K_0(\sqrt{a}r)-K_0(\sqrt{a})I_0(\sqrt{a}r)\right)\leq0,
	\label{rv'}
\end{equation}
hence the function $-r v'(r)$ is nonincreasing. Since $K_{\nu}(z)\sim\frac{1}{2}\Gamma(\nu)(\frac{1}{2}z)^{-\nu}$ we have $\lim_{r\rightarrow 0^+}{\left(-r v'(r)\right)}=I_0(\sqrt{a})$. Moreover, notice the following useful formulas \cite{AS}:
\begin{equation}
	I_\nu(z)K_{\nu+1}(z)+I_{\nu+1}(z)K_{\nu}(z)=\frac{1}{z}
	\label{Wronskian}
\end{equation}
and
\begin{equation}
\begin{split}
	z K_0(z)=-\frac{d}{dz} \left(z K_1(z)\right), \quad z I_0(z)=\frac{d}{dz} \left(z I_1(z)\right), \\
	\frac{d}{dz}I_0(z)=I_1(z), \quad \frac{d}{dz}K_0(z)=-K_1(z).
\end{split}
	\label{pochodne}
\end{equation}
Finally, it is easy to calculate that 
\begin{equation}
	h'(r)=\frac{b}{I_0(\sqrt{a})}\left(v'_0(r)\int_{r}^{1}{t v_1(t) P(h'(t))dt} + v'_1(r)\int_{0}^{r}{t v_0(t) P(h'(t))dt} \right).
	\label{eqint'}
\end{equation}
The question about existence and uniqueness of solution to (\ref{eqpolar}) is answered by the following theorem which uses Picard-Lindeloeff iterations.

\begin{thm}
Define the approximation sequence by
\begin{equation}
	h_0(r):=\frac{b}{a}\left(1-\frac{I_0(\sqrt{a}r)}{I_0(\sqrt{a})}\right),
	\label{h0}
\end{equation}

\begin{equation}
	h_n(r):=\frac{b}{I_0(\sqrt{a})}\left(v_0(r)\int_{r}^{1}{t v_1(t) P(h_{n-1}'(t))dt} + v_1(r)\int_{0}^{r}{t v_0(t) P(h_{n-1}'(t))dt} \right),
	\label{hn}
\end{equation}
where $n=1,2,...$ and $0\leq r \leq1$. The sequence $h_n$ is uniformly convergent to the unique solution of boundary value problem (\ref{eqpolar})-(\ref{boundary}) provided that
\begin{equation}
b<\frac{3\sqrt{3}}{2}\frac{\sqrt{a} I_0(\sqrt{a})}{I_1(\sqrt{a})\left( 2 I_0{\sqrt{a}}-1 \right)}.
\end{equation}
\label{exuniq}
\end{thm}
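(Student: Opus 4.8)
The plan is to recast the problem as a fixed point equation for the derivative and then invoke the Banach contraction principle, which is exactly the content of the Picard--Lindel\"of scheme. Since the nonlinearity $P$ enters (\ref{eqint}) only through $h'$, I would work in the complete metric space $C[0,1]$ with the supremum norm and introduce the operator
\begin{equation}
(Tu)(r):=\frac{b}{I_0(\sqrt{a})}\left(v'_0(r)\int_{r}^{1}{t v_1(t) P(u(t))\,dt} + v'_1(r)\int_{0}^{r}{t v_0(t) P(u(t))\,dt} \right).
\end{equation}
Differentiating (\ref{hn}) produces exactly (\ref{eqint'}), the boundary terms cancelling since $v_0(r)\,r v_1(r)=v_1(r)\,r v_0(r)$; hence the iterates obey $h'_n=T(h'_{n-1})$, while $h_n(1)=0$ because $v_1(1)=0$, so $h_n(r)=-\int_r^1 h'_n(s)\,ds$ is recovered from its derivative. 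Thus uniform convergence of $h_n$ will follow from uniform convergence of $u_n:=h'_n$, and a fixed point $u^*=Tu^*$ corresponds, after integration, to a solution of (\ref{eqint}), equivalently of (\ref{eqpolar})--(\ref{boundary}). It therefore suffices to show that $T$ is a contraction on $C[0,1]$ (that $T$ maps $C[0,1]$ into itself is clear once one notes that the singularity of $v'_1$ at the origin is tamed by the factor $\int_0^r t v_0 P=O(r^2)$).

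Two ingredients enter the contraction estimate. First, I would establish that $P$ is globally Lipschitz: since $P'(x)=-x(1+x^2)^{-3/2}$, a one-variable optimisation gives $\sup_{x\in\mathbb{R}}|P'(x)|=|P'(1/\sqrt{2})|=\tfrac{2}{3\sqrt{3}}=:L$, which is precisely the source of the constant $\tfrac{3\sqrt{3}}{2}$ in the statement. Second, I would evaluate the two inner integrals in closed form. Because $v_0,v_1$ solve the modified Bessel equation $(rv')'=a r v$, integrating yields $\int_0^r t v_0(t)\,dt=\tfrac{1}{a}r v'_0(r)$ and $\int_0^r t v_1(t)\,dt=\tfrac1a\bigl(r v'_1(r)+I_0(\sqrt{a})\bigr)$, where the constant $I_0(\sqrt a)=\lim_{r\to0^+}(-r v'_1(r))$ is the limit recorded after (\ref{rv'}). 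Using the Wronskian (\ref{Wronskian}) with $\nu=0$, $z=\sqrt a$ one finds $v'_1(1)=-1$, whence $\int_r^1 t v_1(t)\,dt=-\tfrac1a\bigl(1+r v'_1(r)\bigr)$.

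Combining these, for $u,v\in C[0,1]$ the sign properties (\ref{bessel}) together with the substitution of the closed forms give
\begin{equation}
|(Tu)(r)-(Tv)(r)|\le \frac{bL}{I_0(\sqrt a)}\,\|u-v\|_\infty\,Q(r), \qquad Q(r):=\frac{v'_0(r)}{a}\bigl(2(-r v'_1(r))-1\bigr).
\end{equation}
The main work is to bound $Q$ uniformly on $[0,1]$: using that $v'_0(r)=\sqrt a\,I_1(\sqrt a r)$ is nondecreasing and that $-r v'_1(r)$ is nonincreasing by (\ref{rv'}) (hence maximal, equal to $I_0(\sqrt a)$, at $r=0$), I would bound the two monotone and nonnegative factors separately to obtain $Q(r)\le \tfrac{1}{\sqrt a}I_1(\sqrt a)\bigl(2I_0(\sqrt a)-1\bigr)$. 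This crude but clean splitting is the crux of the argument; it gives the contraction constant $q=\tfrac{bL}{\sqrt a\,I_0(\sqrt a)}I_1(\sqrt a)(2I_0(\sqrt a)-1)$, and the hypothesis on $b$ is exactly the inequality $q<1$.

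With $q<1$ the Banach fixed point theorem yields a unique $u^*\in C[0,1]$ with $Tu^*=u^*$ and guarantees $u_n\to u^*$ uniformly for any starting iterate; here $u_0=h'_0=T(0)$, corresponding to the linearised problem ($P\equiv1$). Integrating, $h_n\to h^*$ uniformly, where $h^*(r):=-\int_r^1 u^*(s)\,ds$ solves (\ref{eqpolar})--(\ref{boundary}), and uniqueness of the fixed point $u^*$ forces uniqueness of the solution, since any solution of the boundary value problem has a derivative fixed by $T$. The expected obstacle lies entirely in the kernel estimate of the third paragraph---producing the sharp closed forms of the integrals and controlling $Q(r)$---while the fixed-point conclusion is routine.
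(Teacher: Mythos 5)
Your proposal is correct and follows essentially the same route as the paper: the same Lipschitz constant $\tfrac{2}{3\sqrt{3}}$ for $P$, the same closed-form evaluation of $\int_0^r t v_0$ and $\int_r^1 t v_1$ leading to the same kernel bound $R=\tfrac{b}{\sqrt a}\tfrac{I_1(\sqrt a)}{I_0(\sqrt a)}\left(2I_0(\sqrt a)-1\right)$, and the same smallness condition $MR<1$. The only (immaterial) difference is presentational: you invoke the Banach fixed point theorem for the derivative operator $T$ and recover $h_n$ by integration, whereas the paper writes out the equivalent telescoping-series Picard argument and tracks $h_n$ and $h'_n$ simultaneously via the two kernels $F$ and $G$.
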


\begin{proof}
First, observe that P is Lipschitz continuous: $\left|P(x)-P(y)\right|\leq M \left|x-y\right|$, where $M=\frac{2}{3\sqrt{3}}$. We can write (\ref{eqint}) and (\ref{eqint'}) as $h(r)=\int_0^1{F(r,t)P(h'(t))dt}$ and $h'(r)=\int_0^1{G(r,t)P(h'(t))dt}$, where
\begin{equation}
\begin{split}
	F(r,t):=\frac{b}{I_0(\sqrt{a})}\left( t v_0(r)v_1(t) \chi_{[1,r]}(t) + t v_0(t)v_1(r) \chi_{[0,r]}(t) \right), \\
	G(r,t):=\frac{b}{I_0(\sqrt{a})}\left( t v'_0(r)v_1(t) \chi_{[1,r]}(t) + t v_0(t)v'_1(r) \chi_{[0,r]}(t) \right).
\end{split}
\end{equation}
It is an easy calculation using (\ref{rv'})-(\ref{pochodne}) to show that $\int_0^1{\left|F(r,t)\right|dt}\leq Q$ and  $\int_0^1{\left|G(r,t)\right|dt}\leq R$, where
\begin{equation}
\begin{split}
	Q:=\frac{b}{a}\left(1-\frac{1}{I_0(\sqrt{a})}\right), \\
	R:=\frac{b}{\sqrt{a}}\frac{I_1(\sqrt{a})}{I_0(\sqrt{a})}\left( 2 I_0{\sqrt{a}}-1 \right).
\end{split}
\end{equation}

Let $h_n(r)=h_0(r)+\sum_{i=1}^{n}{\left(h_i(r)-h_{i-1}(r)\right)}$ and similarly $h'_n(r)=h'_0(r)+\sum_{i=1}^{n}{\left(h'_i(r)-h'_{i-1}(r)\right)}$. We have to show that these series are uniformly convergent when $n\rightarrow\infty$. By estimate
\[
\begin{split}
	\left|h'_1(r)-h'_0(r)\right| \leq \int_0^1{\left|G(r,t)\right| \left|P(h'_0(t))-P(0)\right|dt} \\
	 \leq M\int_0^1{\left|G(r,t)\right| \left|h'_0(t)\right|dt}\leq MR^2,
\end{split}
\]
and generally
\[
\begin{split}
	&\left|h'_n(r)-h'_{n-1}(r)\right| \leq \int_0^1{\left|G(r,t)\right| \left|P(h'_{n-1}(t))-P(h'_{n-2}(t))\right|dt} \\ 
  &\leq M\int_0^1{\left|G(r,t)\right| \left|h'_{n-1}(t)-h'_{n-2}(t)\right| dt} \leq MR\left\|h'_{n-1}-h'_{n-2}\right\|_{\infty}, 
\end{split}
\]
we get $\left\|h'_n-h'_{n-1}\right\|_{\infty} \leq MR\left\|h'_{n-1}-h'_{n-2}\right\|_{\infty} \leq R(MR)^n$, where $\left\|.\right\|_{\infty}$ is a supremum norm. Moreover,
\[
	\left|h_n(r)-h_{n-1}(r)\right| \leq M\int_0^1{\left|F(r,t)\right| \left|h'_{n-1}(t)-h'_{n-2}(t)\right|dt}\leq Q (MR)^n,
\]
that is $\left\|h_n-h_{n-1}\right\|_{\infty} \leq Q (MR)^n$.
By the assumption $MR<1$, so the dominant number series are uniformly convergent. From uniform Cauchy-sequence argument, $h_n\rightarrow f$ and $h'_n\rightarrow f'$ for some $f$. The equality $f=h$ follows from continuity of $P$ and (\ref{hn}). This ends the proof.
\end{proof}

Imposing more restrictions on $b$ we can prove some estimates for the solution to (\ref{eqpolar}).

\begin{lem}
Assume that $b\leq\frac{\sqrt{a}}{I_1(\sqrt{a})}\frac{\sqrt{2I_0(\sqrt{a})-1}}{I_0(\sqrt{a})-1}$, then 
\begin{equation}
	\left(2-\frac{1}{I_0(\sqrt{a})}\right)h'_0\leq h' \leq 0.
\end{equation}
\label{lemest}
\end{lem}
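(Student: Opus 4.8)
The plan is to prove the two one-sided estimates separately, reducing both to control of the single quantity $p_*:=\min_{t\in[0,1]}P(h'(t))=\bigl(1+\|h'\|_\infty^2\bigr)^{-1/2}$, the least value of $P$ along the solution. Everything rests on two exact identities coming from the fact that $v_0,v_1$ solve the modified Bessel equation $\tfrac1r(rv')'=av$, i.e. $(tv')'=atv$, together with (\ref{Wronskian}). Integrating these and using $v_0'(0)=0$ and $v_1'(1)=-1$ (the latter from (\ref{Wronskian}) at $z=\sqrt a$) gives
\begin{equation*}
\int_0^r t\,v_0(t)\,dt=\frac{r\,v_0'(r)}{a},\qquad \int_r^1 t\,v_1(t)\,dt=\frac{-r\,v_1'(r)-1}{a}.
\end{equation*}
Writing $\psi(r):=-r\,v_1'(r)$, which by (\ref{rv'}) is nonincreasing with $\psi(0)=I_0(\sqrt a)$ and $\psi(1)=1$, these identities let me express the two integral pieces of $h_0'$ (the $P\equiv1$ instance of (\ref{eqint'})) as $\frac{b}{I_0(\sqrt a)}v_0'(r)\int_r^1 tv_1\,dt=|h_0'(r)|\,(\psi(r)-1)$ and $-\frac{b}{I_0(\sqrt a)}v_1'(r)\int_0^r tv_0\,dt=|h_0'(r)|\,\psi(r)$, whose difference is exactly $-|h_0'(r)|$, consistent with (\ref{h0}).

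Next I would establish the upper bound $h'\le0$ by a maximum-principle argument on $w(r):=r\,h'(r)$. From (\ref{eqpolar}) one has $w(0)=0$ and $w'(r)=r\bigl(ah(r)-bP(h'(r))\bigr)$. Because the kernel $F$ in (\ref{eqint}) is nonnegative and $P\le1$, one has $h(r)=\int_0^1 F(r,t)P(h'(t))\,dt\le\int_0^1 F(r,t)\,dt=h_0(r)$ (the last equality being the $P\equiv1$ instance, which reduces to $h_0$ via the identities and Wronskian above), whence $ah(r)\le ah_0(0)<b$ throughout $[0,1]$. Hence at any zero $r_*\in(0,1]$ of $w$ one has $P(h'(r_*))=1$ and $w'(r_*)=r_*\bigl(ah(r_*)-b\bigr)<0$; since $w<0$ just to the right of $0$, $w$ cannot cross upward through $0$, so $w\le0$ and $h'\le0$ on $[0,1]$. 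This step uses only $P\le1$ and not the hypothesis on $b$.

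The crux is then to convert $h'\le0$ into a \emph{sharp} a priori bound on $\|h'\|_\infty$: the estimate $\|h'\|_\infty\le R$ from Theorem~\ref{exuniq} is too weak to close the argument. Using $h'\le0$ in (\ref{eqint'}) I would discard the nonnegative first ($v_0'$) term and bound the second with $P\le1$, getting $|h'(r)|=-h'(r)\le-\frac{b}{I_0(\sqrt a)}v_1'(r)\int_0^r tv_0\,dt=|h_0'(r)|\,\psi(r)\le\frac{b\,I_1(\sqrt a)}{\sqrt a}$, the last step bounding $|h_0'|$ and $\psi$ by their respective maxima. The hypothesis $b\le\frac{\sqrt a}{I_1(\sqrt a)}\frac{\sqrt{2I_0(\sqrt a)-1}}{I_0(\sqrt a)-1}$ is then exactly what forces $\|h'\|_\infty\le\frac{\sqrt{2I_0(\sqrt a)-1}}{I_0(\sqrt a)-1}$, equivalently $p_*\ge1-\frac{1}{I_0(\sqrt a)}$.

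Finally I would obtain the lower bound from (\ref{eqint'}) by using $P\ge p_*$ in the positive $v_0'$ term and $P\le1$ in the negative $v_1'$ term. In terms of the pieces above this reads $h'(r)\ge p_*|h_0'(r)|(\psi(r)-1)-|h_0'(r)|\psi(r)=h_0'(r)\bigl[\psi(r)(1-p_*)+p_*\bigr]$, and since $\psi\le I_0(\sqrt a)$ and $p_*\ge1-\frac{1}{I_0(\sqrt a)}$ the bracket is at most $I_0(\sqrt a)(1-p_*)+p_*\le2-\frac{1}{I_0(\sqrt a)}$; multiplying by $h_0'\le0$ reverses the inequality and yields $h'\ge\bigl(2-\frac{1}{I_0(\sqrt a)}\bigr)h_0'$. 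The main difficulty throughout is the sign-indefiniteness of the kernel $G$ in (\ref{eqint'}): its positive and negative contributions must be estimated with opposite choices of the $P$-bound, and only after fixing the sign of $h'$ and extracting the sharpened sup-norm estimate does the constant $2-\frac{1}{I_0(\sqrt a)}$ emerge exactly from the stated bound on $b$.
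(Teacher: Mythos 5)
Your argument is correct, and its backbone coincides with the paper's: use the hypothesis on $b$ to extract the a priori bound $\left\|h'\right\|_\infty\leq\frac{\sqrt{2I_0(\sqrt{a})-1}}{I_0(\sqrt{a})-1}$, deduce $P(h')\geq 1-\frac{1}{I_0(\sqrt{a})}$, and then feed the two-sided bound on $P$ back into (\ref{eqint'}), estimating the nonnegative $v_0'$-piece and the nonpositive $v_1'$-piece with opposite ends of that bound. Your closed forms $|h_0'(r)|(\psi(r)-1)$ and $|h_0'(r)|\psi(r)$ for those two pieces, with $\psi(r)=-rv_1'(r)$, are exactly the quantities the paper writes out explicitly via (\ref{Wronskian}) and the monotonicity statement (\ref{rv'}), so that part is the same computation in cleaner notation. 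The one genuine departure is your treatment of $h'\leq0$: the paper obtains it at the end from the refined inequality $C\leq P\leq1$ (taking $P\leq1$ in the $v_0'$-integral and $P\geq C$ in the $v_1'$-integral, the resulting bracket $(1-C)\psi(r)-1$ being nonpositive because $\psi\leq I_0(\sqrt{a})$), whereas you prove it up front by a maximum-principle argument on $w=rh'$ using only $P\leq1$ and $ah\leq ah_0(0)<b$. Your route makes visible something the paper's proof does not, namely that monotonicity of $h$ needs no restriction on $b$ at all, and it lets you reach the sup-norm bound from the single estimate $-h'\leq|h_0'|\,\psi\leq bI_1(\sqrt{a})/\sqrt{a}$ rather than bounding $h'$ and $-h'$ from above separately as the paper does. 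Both proofs then close identically on the lower bound: with $p_*=\min P(h')\geq 1-\frac{1}{I_0(\sqrt{a})}$ one gets $\psi(1-p_*)+p_*\leq I_0(\sqrt{a})(1-p_*)+p_*\leq 2-\frac{1}{I_0(\sqrt{a})}$, and multiplying by $h_0'\leq0$ reverses the inequality.
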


\begin{proof}
Take $b$ as in the assumptions. By (\ref{bessel})-(\ref{eqint'}) and since $P\leq1$ we have the estimate
\begin{equation}
\begin{split}
	h'(r)&\leq\frac{b}{I_0(\sqrt{a})} v'_0(r)\int_{r}^{1}{t v_1(t) P(h'(t))dt} \\ 
			 &\leq\frac{b I_1(\sqrt{a}r)}{I_0(\sqrt{a})}\left(r\left(I_0(\sqrt{a})K_1(\sqrt{a}r)+I_1(\sqrt{a}r)K_0(\sqrt{a})\right) -\frac{1}{\sqrt{a}} \right).
\end{split}
\end{equation}
Now, by (\ref{rv'}) and our assumption on $b$ we deduce that
\begin{equation}
	h'(r)\leq\frac{b I_1(\sqrt{a})}{\sqrt{a} I_0(\sqrt{a})}\left(I_0(\sqrt{a})-1\right)\leq\frac{\sqrt{2I_0(\sqrt{a})-1}}{I_0(\sqrt{a})-1}.
\end{equation}
Next, by similar reasoning we have
\begin{equation}
\begin{split}
	-h'(r)&\leq-\frac{b}{I_0(\sqrt{a})}v'_1(r)\int_0^r{t v_0(r)P(h'(t))dt} \leq-\frac{b I_1(\sqrt{a}r)}{\sqrt{a} I_0(\sqrt{a})}r v'_1(r) \\
				&\leq\frac{b I_1(\sqrt{a})}{\sqrt{a}}\leq\frac{\sqrt{2I_0(\sqrt{a})-1}}{I_0(\sqrt{a})-1}.
\end{split}
\end{equation}
We have shown that $\left|h'(r)\right|\leq \frac{\sqrt{2I_0(\sqrt{a})-1}}{I_0(\sqrt{a})-1}$, thus by (\ref{eqv})
\begin{equation}
	C\leq P(h'(t)) \leq 1,
	\label{P}
\end{equation}
where $C=1-\frac{1}{I_0(\sqrt{a})} (0<C<1)$. Finally, from (\ref{rv'}) and (\ref{P}) we deduce that
\begin{equation}
\begin{split}
	h'(r)&\leq\frac{b}{I_0(\sqrt{a})}\left(v'_0(r)\int_{r}^{1}{t v_1(t)dt} + Cv'_1(r)\int_{0}^{r}{t v_0(t)dt} \right) \\
			 &=\frac{b I_1(\sqrt{a}r)}{I_0(\sqrt{a})}\left((1-C)r\left(I_0(\sqrt{a})K_1(\sqrt{a}r)+K_0(\sqrt{a})I_1(\sqrt{a}r)\right) -\frac{1}{\sqrt{a}} \right)\leq0.
\end{split}
\end{equation}
Similarly, estimating from below we can prove
\begin{equation}
\begin{split}
	-h'(x)&\leq-\frac{b}{I_0(\sqrt{a})}\left(Cv'_0(r)\int_{r}^{1}{t v_1(t)dt} + v'_1(r)\int_{0}^{r}{t v_0(t)dt} \right) \\
				&=\frac{b I_1(\sqrt{a}r)}{I_0(\sqrt{a})}\left( (1-C)r\left(I_0(\sqrt{a})K_1(\sqrt{ar})+K_0(\sqrt{a})I_1(\sqrt{a}r)\right)+\frac{C}{\sqrt{a}} \right) \\
				&\leq\frac{b I_1(\sqrt{a}r)}{\sqrt{a}I_0(\sqrt{a})}\left(2-\frac{1}{I_0(\sqrt{a})} \right)=-\left(2-\frac{1}{I_0(\sqrt{a})}\right)h'_0(r).
\end{split}
\end{equation}
Therefore we have proved our claims.
\end{proof}

\begin{cor}
	Let $b\leq\frac{\sqrt{a}}{I_1(\sqrt{a})}\frac{\sqrt{2I_0(\sqrt{a})-1}}{I_0(\sqrt{a})-1}$. Solution of the problem (\ref{eqpolar})-(\ref{boundary}) is a positive, nonincreasing function $h$ with
\begin{equation}
	A h_1\leq h \leq h_0,
\end{equation}
where $h_0$ and $h_1$ are defined by (\ref{h0})-(\ref{hn}), while 
\begin{equation}
A=\frac{1+h'_0(1)^2}{1+\left(2-\frac{1}{I_0(\sqrt{a})}\right)h'_0(1)^2}.
\end{equation}
\label{estimates}
\end{cor}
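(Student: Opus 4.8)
\section*{Proof proposal}

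The plan is to extract everything from the Green-function representation $h(r)=\int_0^1 F(r,t)\,P(h'(t))\,dt$ introduced in the proof of Theorem~\ref{exuniq}, in which $F(r,t)\ge 0$ by the sign information in \eqref{bessel}. Both comparison functions share this template. Indeed $h_1$ is nothing but \eqref{hn} with $n=1$, that is $h_1(r)=\int_0^1 F(r,t)\,P(h_0'(t))\,dt$, whereas $h_0$ solves the linear problem obtained from \eqref{eqpolar} by replacing $P$ with the constant $1$, so that $h_0(r)=\int_0^1 F(r,t)\,dt$. I would verify this last identity directly from the Wronskian \eqref{Wronskian} and the differentiation formulas \eqref{pochodne}, exactly as $Q$ is computed in Theorem~\ref{exuniq}; note in passing that $Q=h_0(0)$. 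The two qualitative assertions are then immediate from Lemma~\ref{lemest}: it gives $h'\le 0$, hence $h$ is nonincreasing, and combined with $h(1)=0$ this yields $h(r)\ge h(1)=0$, i.e. positivity.

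For the upper bound I would only use $0<P\le 1$ from \eqref{P} together with $F\ge 0$: integrating the pointwise inequality $F(r,t)P(h'(t))\le F(r,t)$ gives $h(r)\le \int_0^1 F(r,t)\,dt=h_0(r)$. No cancellation occurs, so this half is routine.

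The lower bound $A\,h_1\le h$ is the substantive part, and I would derive it from a single pointwise estimate, $P(h'(t))\ge A\,P(h_0'(t))$ for $t\in[0,1]$; granted this, multiplying by $F(r,t)\ge 0$ and integrating yields $h=\int_0^1 F\,P(h')\ge A\int_0^1 F\,P(h_0')=A\,h_1$ at once. To obtain the ratio bound I would write $P(h'(t))/P(h_0'(t))=\sqrt{(1+h_0'(t)^2)/(1+h'(t)^2)}$ and invoke Lemma~\ref{lemest}, which (since $h_0'\le 0$) amounts to $|h'(t)|\le (2-1/I_0(\sqrt a))\,|h_0'(t)|$ and thereby bounds $1+h'(t)^2$ from above by a multiple of $h_0'(t)^2$. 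The explicit expression $h_0'(r)=-b\,I_1(\sqrt a\,r)/(\sqrt a\,I_0(\sqrt a))$ shows that $|h_0'|$ is increasing in $r$ because $I_1$ is; hence $h_0'(t)^2$ is largest at $t=1$, and since the bound is monotone in $h_0'(t)^2$ its worst case over $[0,1]$ sits at $t=1$ and produces the constant $A$ assembled from $h_0'(1)^2$.

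I expect the one genuinely delicate point to be the bookkeeping of this constant: after substituting the extremal value $h_0'(1)^2$ one must check that the stated $A$ is indeed a lower bound for $\min_{t}P(h'(t))/P(h_0'(t))$, reconciling the square root and the power of $2-1/I_0(\sqrt a)$ that emerge from the estimate $1+h'^2\le 1+(2-1/I_0(\sqrt a))^2 h_0'^2$ with the closed form given for $A$. Everything else---positivity, monotonicity, and the upper bound---follows mechanically from the signs in \eqref{bessel} and the two-sided control of $P$ recorded in \eqref{P}.
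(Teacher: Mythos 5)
Your strategy is precisely the paper's: it too writes $h(r)=\int_0^1F(r,t)P(h'(t))\,dt$ with $F\ge0$, gets monotonicity and positivity from Lemma~\ref{lemest}, the upper bound from $P\le1$ together with $h_0(r)=\int_0^1F(r,t)\,dt$, and the lower bound by integrating a pointwise inequality $A\,P(h_0'(t))\le P\bigl((2-1/I_0(\sqrt a))\,h_0'(t)\bigr)\le P(h'(t))$ against the nonnegative kernel; the paper dismisses the first of these pointwise inequalities as ``simple calculus.'' All of your preliminary identifications ($h_0=\int_0^1F(r,t)\,dt$, $h_1=\int_0^1F(r,t)P(h_0'(t))\,dt$, $Q=h_0(0)$) are correct, as are the positivity, monotonicity and upper-bound halves.

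The step you explicitly deferred is, however, exactly where the argument fails to close with the constant as printed. Put $c=2-\tfrac{1}{I_0(\sqrt a)}>1$ and $x=h_0'(1)^2$. Your chain (and the paper's) gives
\begin{equation*}
\frac{P(h'(t))}{P(h_0'(t))}\;\ge\;\sqrt{\frac{1+h_0'(t)^2}{1+c^2h_0'(t)^2}}\;\ge\;\sqrt{\frac{1+x}{1+c^2x}},
\end{equation*}
the last inequality because $u\mapsto(1+u)/(1+c^2u)$ is decreasing and $|h_0'|$ is increasing. But the stated constant is $A=(1+x)/(1+cx)$, and expanding shows $(1+cx)^2-(1+x)(1+c^2x)=-x(c-1)^2\le0$, hence $A\ge\sqrt{(1+x)/(1+c^2x)}$ with \emph{strict} inequality for $x>0$: the printed $A$ exceeds anything this route can deliver. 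Concretely, at $a=b=2$ (which satisfies the hypothesis on $b$, and is the very case plotted in Figure~\ref{estpic}) one computes $A\,P(h_0'(1))\approx0.679>0.671\approx P(c\,h_0'(1))$, so the pointwise inequality $A\,P(h_0')\le P(c\,h_0')$ is false at $t=1$. What your argument does prove is the bound with constant $\frac{1+x}{1+c^2x}$ (using $\sqrt u\ge u$ for $0<u\le1$), i.e.\ the printed formula with the factor $2-1/I_0(\sqrt a)$ squared --- presumably the intended reading --- or the slightly sharper $\sqrt{(1+x)/(1+c^2x)}$. So you must actually carry out the bookkeeping you flagged, and when you do you will find you either have to weaken $A$ to $\frac{1+h_0'(1)^2}{1+\left(2-1/I_0(\sqrt a)\right)^2h_0'(1)^2}$ or supply information about $h'$ beyond Lemma~\ref{lemest}.
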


\begin{proof}
Lemma \ref{lemest} gives us the fact that $h$ is nonincreasing. Moreover, we have $P\left(\left(2-\frac{1}{I_0(\sqrt{a})}\right)h'_0\right)\leq P(h)\leq 1$. Simple calculus allows us to deduce that $A P\left(h'_0\right)\leq P\left(\left(2-\frac{1}{I_0(\sqrt{a})}\right)h'_0\right)$. Now using the equation for $h$ (\ref{eqint}) we can conclude the proof.
\end{proof}
Various estimates on $h$ are depicted on Figure \ref{estpic}. The convergence of $h_n$ is very fast. In fact, only after 4 iterations we have $\left\| h_4-h_3\right\|_\infty\approx10^{-6}$. Although Picard's iterations can be very demanding on computer power they converge very rapidly hence the number of iterations can be kept small.

\begin{figure}[htb!]
	\centering
	\includegraphics[trim = 40mm 90mm 40mm 90mm, scale=0.5]{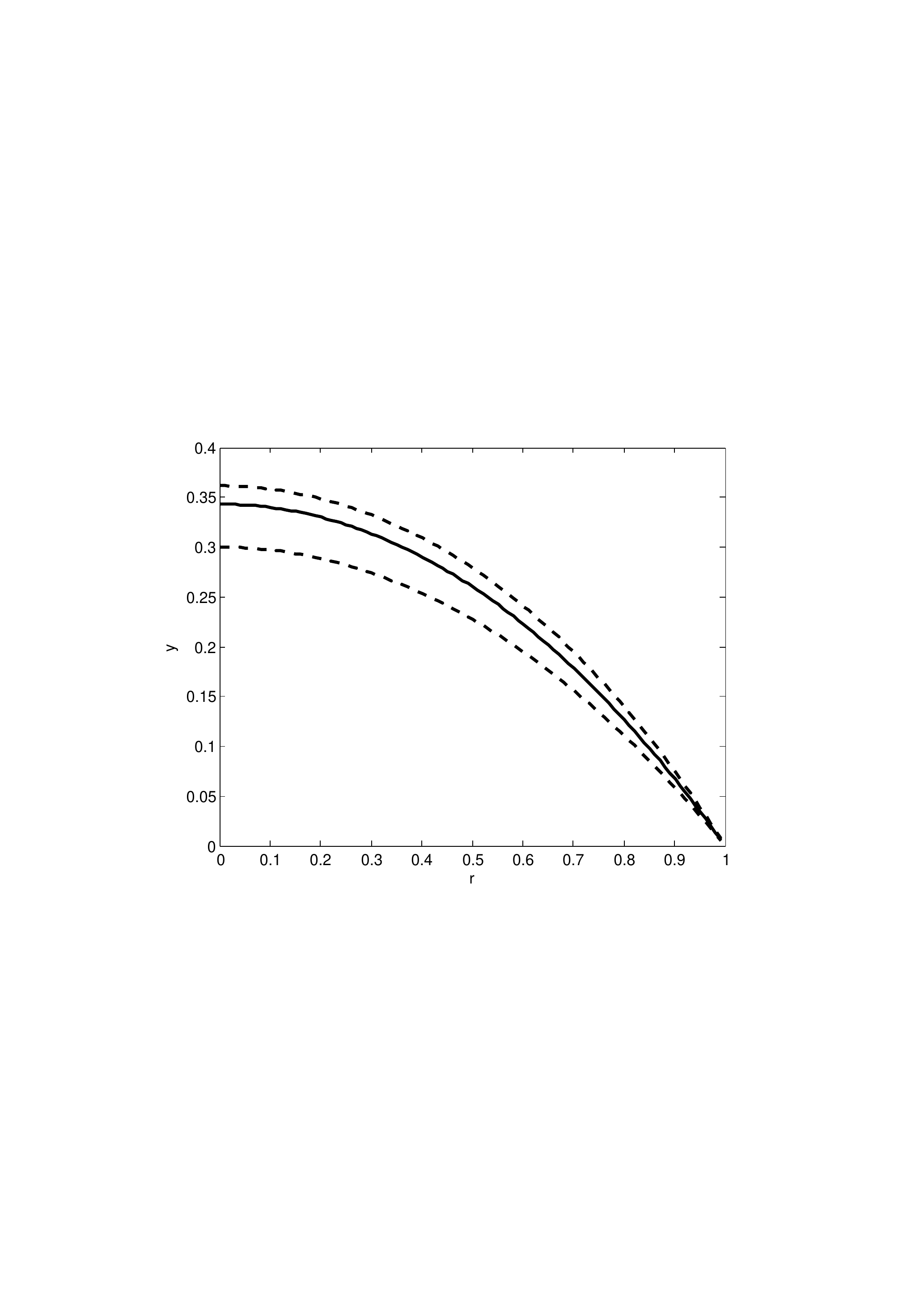}
	\caption{Estimates on $h$ for $a=b=2$. Dashed line: $h_0$ (top) and $A h_1$ (bottom). Solid line: $h_4$.} 
	\label{estpic}
\end{figure} 

To provide validity of Theorem $\ref{exuniq}$ and Corollary \ref{estimates} we have to restrict the values of $b$. Nevertheless, we will see in the next section that values of $(a,b)$ calculated from the data lie in the admissible region. Figure \ref{ab} shows the assumed restrictions on $b$.

\begin{figure}[htb!]
	\centering
	\includegraphics[scale=0.7]{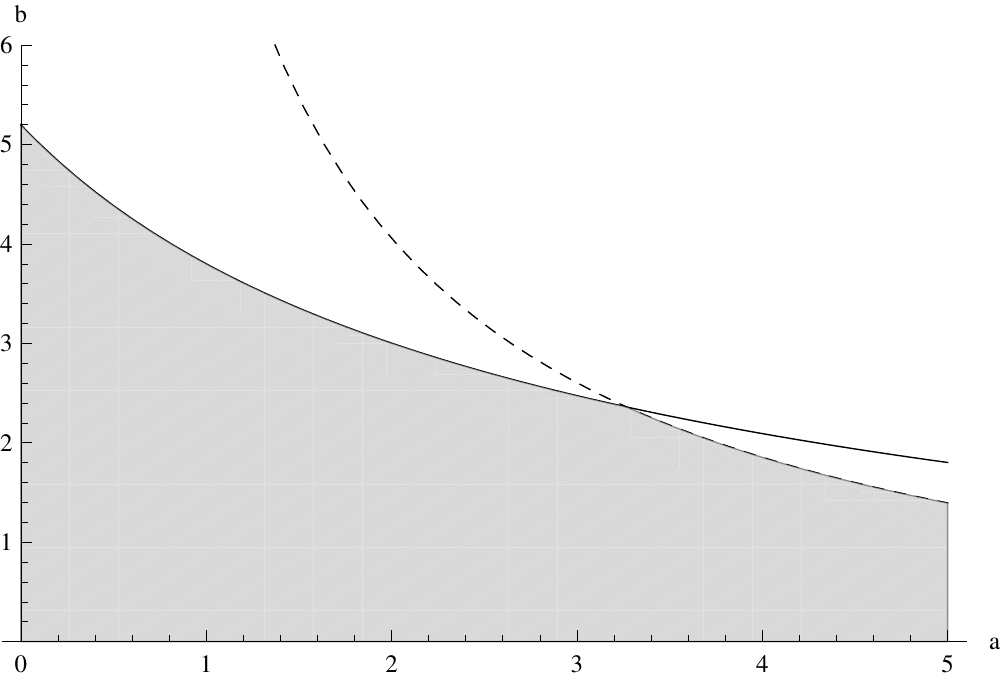}
	\caption{Values of $(a,b)$ assumed in Theorem \ref{exuniq} and Corollary \ref{estimates}. Solid line: $\frac{3\sqrt{3}}{2}\frac{\sqrt{a} I_0{\sqrt{a}}}{I_1{\sqrt{a}}\left( 2 I_0{\sqrt{a}}-1 \right)}$. Dashed line: $\frac{\sqrt{a}}{I_1(\sqrt{a})}\frac{\sqrt{2I_0(\sqrt{a})-1}}{I_0(\sqrt{a})-1}$.}
	\label{ab}
\end{figure}

\section{Data Fitting}
\label{fitting}
In this section we present the results of data fitting. Instead of exact solution of (\ref{eqpolar}) we will use its zeroth-order approximation $h_0$ which is defined by (\ref{h0}). The use of $h_0$ rather than $h$ is equivalent to making the assumption that the pressure acts in the $z$-direction as apposed to the normal direction. Moreover, $h_0$ has a simple analytic form and hence is much easier to use in applications. 

We will determine constants $a$ and $b$ by taking two measurements: cornea's maximum deflection and central radius of curvature. These two values are readily measurable quantities and allow us to make a fit on the whole domain. Suppose we know maximum deflection $h_0(0)$ and central radius of curvature $\rho(0)$, where $\rho(x)=\frac{(1+(h_0'(x))^2)^{\frac{3}{2}}}{|h_0''(x)|}$. Because $h'_0(0)=0$ we have $\rho(0)=\frac{2 I_0(\sqrt{a})}{b}$, thus
\begin{equation}
	b=\frac{2 I_0(\sqrt{a})}{\rho(0)},
\end{equation} 
where in calculating the second derivative we used $2I'_1(z)=I_0(z)+I_2(z)$ (see \cite{AS}). Finally, using (\ref{h0}) we deduce that $a$ is the solution of the following equation
\begin{equation}
	\frac{1}{2}h_0(0)\rho(0)a-I_0(\sqrt{a})+1=0.
\end{equation}
For two datasets consisting of $123\times123$ points representing interior and exterior surfaces of the cornea we calculated that $a_{in}\approx 2.07883$, $b_{in}\approx 2.76741$, $a_{ex}\approx 1.94398$ and $b_{ex}\approx2.27534$. It is evident that these points are within the admissible region depicted on Fig. \ref{ab}.

To make the fitting more accurate we can resign from assumed radial symmetry by making a slight perturbation in the domain $\Omega$. Specificly, as $\Omega$ we will use ellipse instead of a circle. The eccentricity of this ellipse can be calculated from the data, for example, by measuring the eccentricity of level curves of the corneal shape. In order to reshape our $\Omega$ into an ellipse we have to change $r=\sqrt{x^2+y^2}$ into $r=\sqrt{\frac{x^2}{R_1^2}+\frac{y^2}{R_2^2}}$, where $R_1$ and $R_2$ are semiaxes of the ellipse in the domain.

Mean fitting errors for interior and exterior surfaces are respectively $0.035mm$ $(3.6\%)$ and $0.014mm$ $(1.4\%)$ and squared eccentricities are $-0.0214$ and $0.0234$. The interior surface is thus slightly prolate while exterior is oblate. It is evident that deviations from circular shape are very minute. The magnitude of absolute fitting error is shown on Fig. \ref{3D}.

\begin{figure}[htb!]
	\centering
	\includegraphics[scale=0.5]{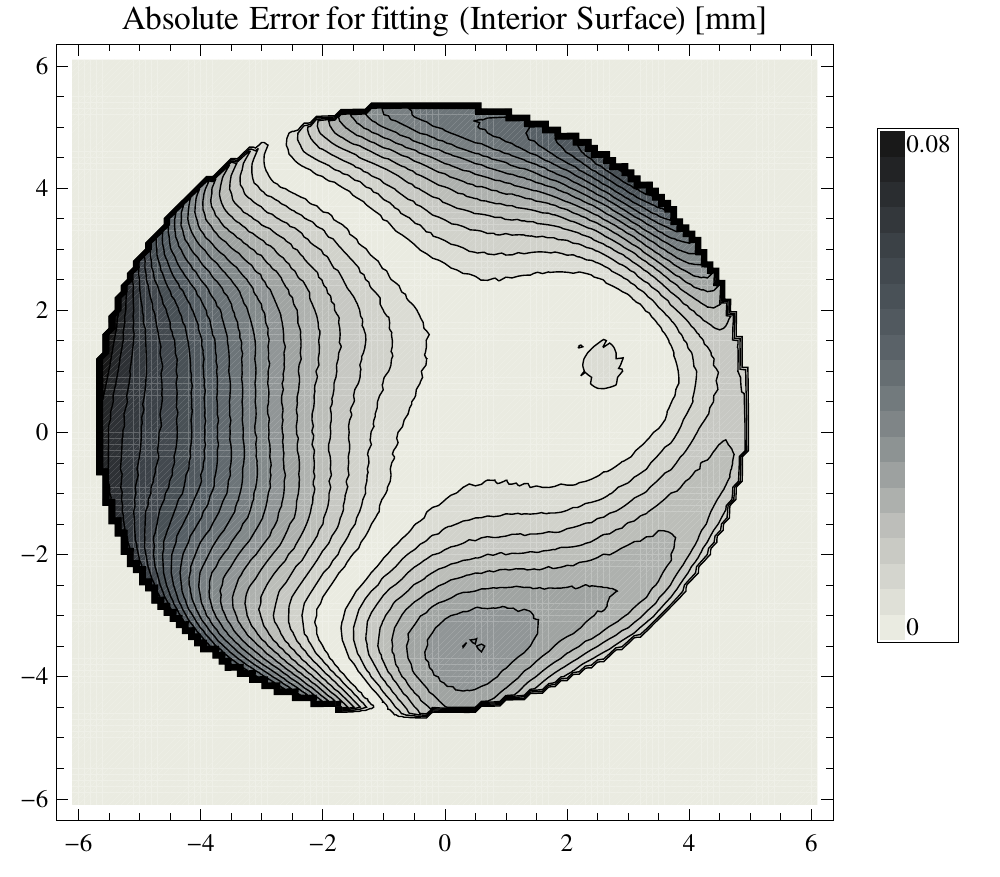}
	\includegraphics[scale=0.5]{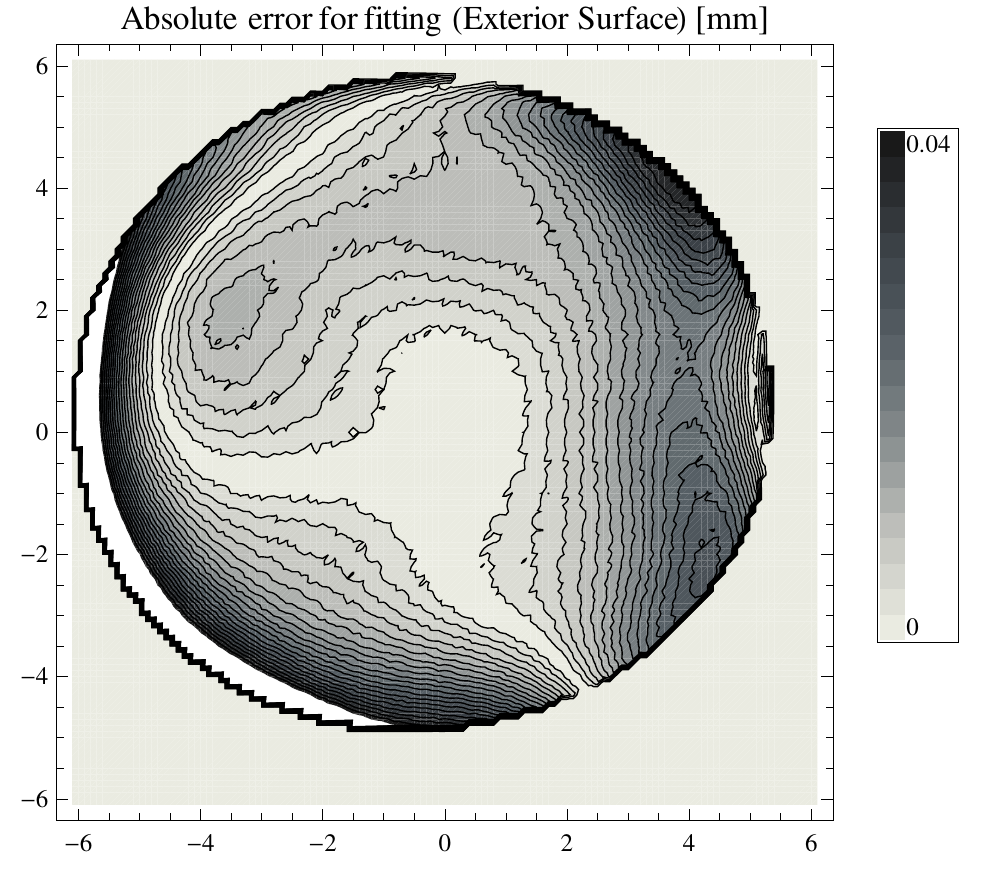}
	\caption{Absolute fitting errors for interior (left) and exterior (right) surfaces.}
	\label{3D}
\end{figure} 

Another important feature of cornea's geometry is its curvature. For measuring radius of curvature optometrists use the following quantity known as axial distance (\cite{Klein})
\begin{equation}
	d=r \sqrt{1+\frac{1}{h_r^2+\frac{1}{r^2}h_{\theta}^2}}=\sqrt{x^2+y^2} \sqrt{1+\frac{1}{h_x^2+h_y^2}}.
\end{equation}
In the case of surface of revolution, normal vector lies in the meridional plane so $d$ is the distance along the normal vector from the surface point to the axis of revolution. The absolute error in fitting $d$ calculated for our $h_0$ is depicted on Fig. \ref{cur}. Mean errors are approximately $0.146mm$ $(2.1\%)$ and $0.14mm$ $(1.7\%)$ for interior and exterior surfaces respecively.

\begin{figure}[htb!]
	\centering
	\includegraphics[scale=0.5]{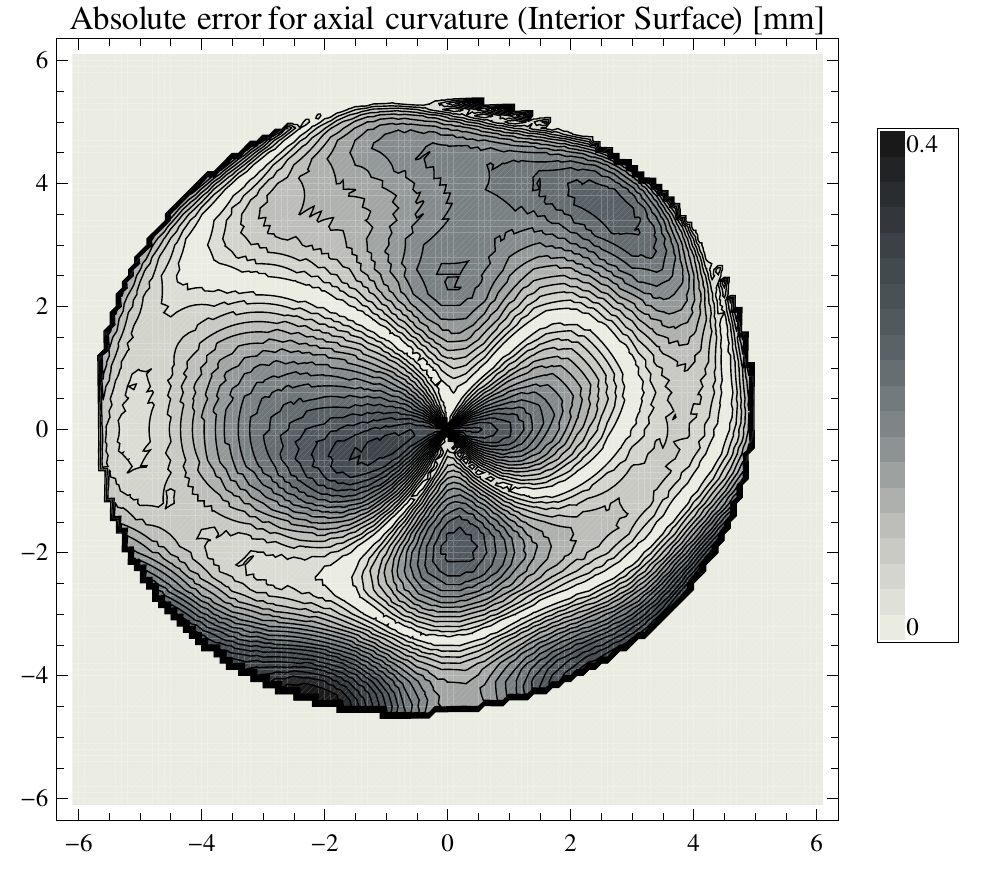}
	\includegraphics[scale=0.5]{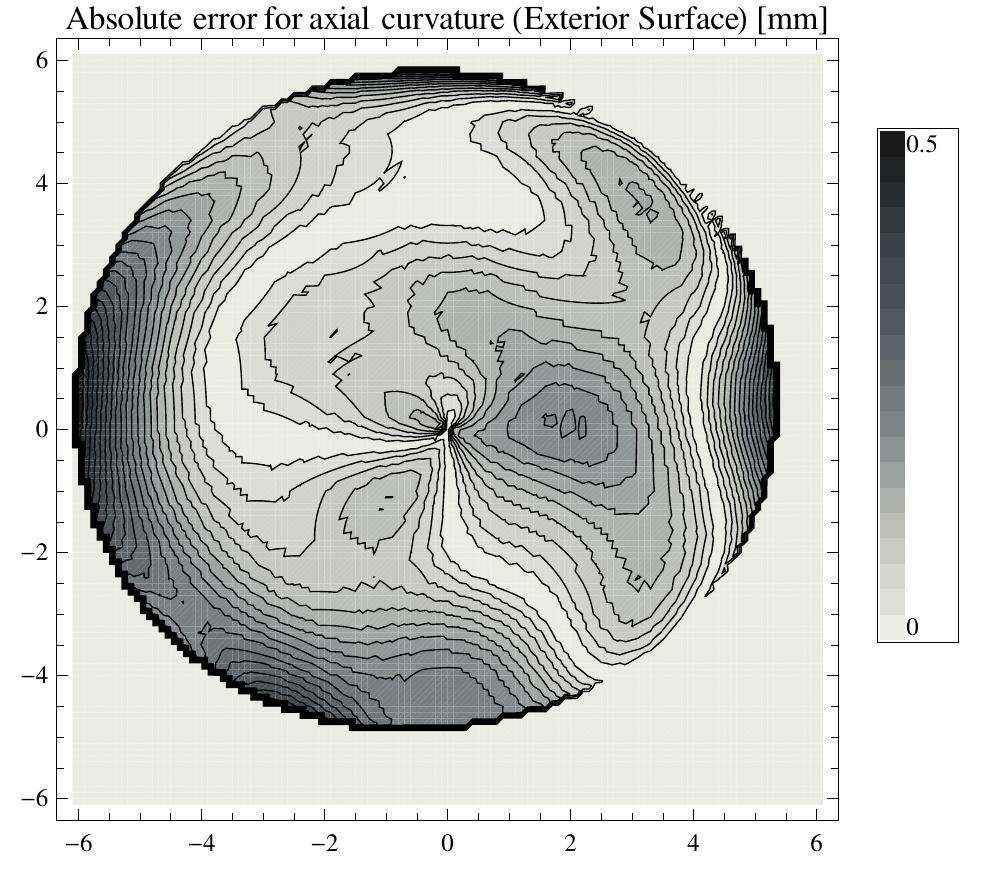}
	\caption{Absolute errors in axial distance for interior (left) and exterior (right) surfaces.}
	\label{cur}
\end{figure} 

\section{Conclusion and discussion}
We have proposed a nonlinear mathematical model of corneal geometry based on a thin membrane equation. The existence and uniqueness of the solution was proved provided that one of the parameters was bounded in respect the the other. Various estimates (Lemma \ref{lemest} and Corollary \ref{estimates}) showed that the solution behaves according to common physical intuition. Approximating sequence (\ref{hn}) converges very rapidly to the exact solution what can be deduced from its definition and numerical experiments.

We have fitted zeroth-order approximation $h_0$ to the two meshes of $123\times123$ points representing interior and exterior surfaces of cornea. We have slightly perturbed its radial symmetry according to the measurable quantities. The mean fitting error was of order of a few per cent which provided a good fit. The error was slightly smaller for exterior than interior surface. Also, the function known as axial distance representing cornea's radius of curvature was in a good accordance with the data. Again, the magnitude of errors were of order of a few per cent what allows us to conclude that this is a good result for a model of that simplicity.

\section*{Acknowledgment}
Authors would like to thank Dr. Robert Iskander from Institute of Biomedical Engineering
and Instrumentation, Wroclaw University of Technology, Poland and School of Optometry, Queensland University of Technology, Australia for access to the data.


\begin{thebibliography}{00}
\bibitem{Conic} H. Kasprzak, D.R. Iskander, Approximating ocular surfaces by generalized conic curves, Ophthal. Physiol. Opt., 26:602-609, 2006.
\bibitem{Shell} K. Anderson, A. El-Sheikh, T. Newson, Application of structural analysis to the
mechanical behaviour of the cornea, J. R. Soc. Interface 1, 3-15, 2004.
\bibitem{Zernike} D.R. Iskander, M.J. Collins, B. Davis, Optimal Modeling of Corneal Surfaces by Zernike Polynomials, IEEE Transactions on Biomedical Engineering, Vol. 48, No. 1, 2001.
\bibitem{Cornea} W. Trattler, P. Majmudar, J. I. Luchs, T.Swartz, Cornea Handbook, Slack Incorporated, 2010.
\bibitem{Tik} A. N. Tikhonov, A. A. Samarskii, Equations of Mathematical Physics, Dover Publications, 1963.
\bibitem{AS} M. Abramowitz, I. A. Stegun, Handbook of Mathematical Functions: with Formulas, Graphs, and Mathematical Tables, Dover Publications, 1965.
\bibitem{Klein} S. A. Klein, Axial curvature and the Skew Ray Error in Corneal Topography, Optometry and Vision Science, Vol. 74, No. 11, PP. 931-944, 1997.
\end{thebibliography}
\end{document}